\definecolor{mylinkcolor}{RGB}{0,0,130}
\pgfplotsset{compat=1.16}
\providecommand{\keywords}[1]{\textbf{\textit{Keywords ---}} #1}
\providecommand{\acknowledgements}[1]{\textbf{\textit{Acknowledgements ---}} #1}
\newtheorem{myrem}{Remark}
\newtheorem{myprop}{Proposition}
\crefname{secname}{Section}{Section}
\Crefname{secname}{Sec.}{Sec.}
\crefname{section}{Section}{Sections}
\crefname{subsection}{Subsection}{Subsections}
\newcommand{\bbR}{\mathbb{R}} 
\newcommand{\cH}{\mathcal{H}} 
\newcommand{\cC}{\mathcal{C}} 
\newcommand{\cV}{\mathcal{V}} 
\newcommand{\bH}{\boldsymbol{H}} 
\newcommand{\bJ}{\boldsymbol{J}} 
\newcommand{\bD}{\boldsymbol{D}} 
\newcommand{\bB}{\boldsymbol{B}} 
\newcommand{\bA}{\boldsymbol{A}} 
\newcommand{\bV}{\boldsymbol{V}} 
\newcommand{\bI}{\boldsymbol{I}} 
\newcommand{\bE}{\boldsymbol{E}} 
\newcommand{\bQ}{\boldsymbol{Q}} 
\newcommand{\bX}{\boldsymbol{X}} 
\newcommand{\bx}{\boldsymbol{x}} 
\newcommand{\bu}{\boldsymbol{u}} 
\newcommand{\by}{\boldsymbol{y}}
\newcommand{\norm}[1]{ \left\| #1 \right\| }
\newcommand{\hnorm}[1]{ \norm{#1}_{\bH} }
\DeclareMathOperator{\colspan}{colspan}
\newcommand{\rN}[1]{%
  \textup{\uppercase\expandafter{\romannumeral#1}}%
}
\newcommand{\statex}{\bm{x}}                           
\newcommand{\basisprimal}{\bV}
\newcommand{\redstate}{{\statex}_{r}}           
\newcommand{\Dredstate}{\dot{{\statex}}_{r}}           
\newcommand{\approxstate}{\hat{\statex}}          
\newcommand{\error}{\bm{e}}
\newcommand{\Derror}{\dot{\error}}                
\DeclareMathOperator{\res}{\boldsymbol{\mathrm{r}}}
\newcommand{\projprim}{ \Pi_{\rN{1}}} 
\newcommand{\rederror}{{\error}_{r}}           
\newcommand{\Drederror}{\dot{{\error}}_{r}}           
\newcommand{\approxerror}{\hat{\error}}          
\newcommand{\basisalp}{\bV_{\text{A}}} 
\newcommand{\basishier}{\bV_{\text{H}}} 
\newcommand{\basisplus}{\bV_{\text{+}}} 
\newcommand{\sizealp}{n_{\text{A}}}
\newcommand{\sizehier}{n_{\text{H}}}
\newcommand{\sizeplus}{n_{\text{+}}}
\newcommand{\erroraux}{\error_{\text{A}}}                
\newcommand{\Derroraux}{\dot{\error}_{\text{A}}}                
\DeclareMathOperator{\resalp}{\boldsymbol{\mathrm{r}}_{\text{A}}}  
\DeclareMathOperator{\reshier}{\boldsymbol{\mathrm{r}}_{\text{H}}}  
\newcommand{\projalp}{ \Pi_{\text{A}}} 
\newcommand{\projhier}{ \Pi_{\text{H}}} 
\newcommand{\redstateaux}{{\statex}_{\text{H},r}} 
\newcommand{\Dredstateaux}{\dot{{\statex}}_{\text{H},r}}
\newcommand{\approxstateaux}{\hat{\statex}_{\text{H}}}
\newcommand{\errorauxhier}{\error_{\text{H}}}                
\newcommand{\Derrorauxhier}{\dot{\error}_{\text{H}}}             
\newcommand{\boundstandard}{\Delta_{\text{S}}} 
\newcommand{\boundalp}{\Delta_{\text{A}}} 
\newcommand{\boundhier}{\Delta_{\text{H}}} 
\newcommand{\effgen}{\mathtt{eff}}
\newcommand*{\tran}{^{\mkern-1.5mu\mathsf{T}}} 
\author{Johannes Rettberg\thanks{Institute of Engineering and Computational Mechanics, University of Stuttgart, Pfaffenwaldring 9, 70569 Stuttgart, Germany. (\url{johannes.rettberg,joerg.fehr@itm.uni-stuttgart.de})} \and Dominik Wittwar \thanks{Institute of Applied Analysis and Numerical Simulation, University of Stuttgart, Pfaffenwaldring 57, 70569 Stuttgart, Germany. (\url{dominik.wittwar,patrick.buchfink,robin.herkert,haasdonk@mathematik.uni-stuttgart.de})} \and Patrick Buchfink\footnotemark[2] \and Robin Herkert\footnotemark[2] \and Jörg Fehr\footnotemark[1] \and Bernard Haasdonk\footnotemark[2]}
\title{Improved a posteriori Error Bounds for Reduced port-Hamiltonian Systems}
\begin{document}
\maketitle
\begin{abstract}
\small
\textbf{Abstract} Projection-based model order reduction of dynamical systems usually introduces an error between the high-fidelity model and its counterpart of lower dimension. This unknown error can be bounded by residual-based methods, which are typically known to be highly pessimistic in the sense of largely overestimating the true error. This work applies two improved error bounding techniques, namely (a)~\textit{a hierarchical error bound} and (b)~\textit{an error bound based on an auxiliary linear problem}, to the case of port-Hamiltonian systems. The approaches rely on a second approximation of (a) the dynamical system and (b) the error system. In this paper, these methods are for the first time adapted to port-Hamiltonian systems by exploiting their structure. The mathematical relationship between the two methods is discussed both, theoretically and numerically. The effectiveness of the described methods is demonstrated using a challenging three-dimensional port-Hamiltonian model of a classical guitar with fluid-structure interaction.
\end{abstract}
\keywords{structure-preserving model order reduction, port-Hamiltonian system, a posteriori error control, fluid-structure interaction}



%
%
\section{Introduction}
\label{Sec:introduction}
The development of modern products and the understanding of complex processes is only possible using modeling and simulation tools. In order to further optimize the products and get a detailed understanding of the processes, it is typically required to describe real examples over multiple scales and consider multiple physical domains. In an increasingly digitalized world, complex devices can considerably benefit from a digital twin throughout their life cycle, e.g.\ through increased efficiency and possible improvements through insights into data. The digital twin should be able to switch to the respective required accuracy class through a hierarchical model description. From precise descriptions for process understanding to detailed descriptions for repeated simulations to coarse models for real-time control and optimization~\cite{MehrmannUnger22}. The energy-based port-Hamiltonian (pH) framework offers ideal prerequisites for meeting the requirements of a modern simulation-based product lifecycle. The methodology allows systems to be built up modularly and to be coupled mathematically across different scales and physical domains. Through structure-preserving model reduction and flexibility in time and space discretization of the entire system or subsystems, the desire for a hierarchical model is fulfilled. Besides, under mild assumptions, these systems also satisfy helpful system-theoretical properties such as passivity and stability~\cite{MehrmannUnger22, DuindamEtAl09}.

The usual modeling process involves the spatial discretization of the continua and the partial differential equations using, for example, finite element methods that yield a high-dimensional system of ordinary differential or differential-algebraic equations. These high-fidelity models are computationally demanding, and one way of abstracting these models in a smaller subspace is projection-based model order reduction (MOR). In order to preserve the worthwhile properties of pH systems in the coarse models, this model reduction must be done in a structure-preserving manner~\cite{RettbergEtAl22}.

By approximating the high-fidelity model in a low-dimensional subspace,
MOR introduces an error that we refer to as the \emph{reduction error}.
For a pervasive simulation workflow, controlling the reduction error is essential.
Firstly, a sharply bounded error creates confidence in the simulation results.
Secondly, an error bound can be leveraged to generate adaptive methods which adaptively trade accuracy against computational resources, e.g.\ by adjusting the reduced dimension of the low-dimensional subspace~\cite{HaasdonkOhlberger09,HaasdonkEtAl22}.
This adaptivity may save time not only in the development process but also computational resources and thus yields less energy-intensive simulations.
A posteriori bounds for the error which we consider in the present work are:
\begin{itemize}
  \item the \emph{standard error bound}~\cite{HaasdonkOhlberger11},
  \item a \emph{hierarchical error bound} motivated by a suggestion in~\cite{HainEtAl19} and 
  \item the \emph{auxiliary linear problem (ALP) based error bound}~\cite{SchmidtEtAl20}.
\end{itemize}
The standard error bound uses analysis of the residual and stability constants. The hierarchical error bound uses an additional MOR solution for the state of a finer reduced model to bound the error.
The ALP error bound uses an additional reduced model for the error to derive an error bound.
The goal of the error estimation is to approximate the true error as closely as possible in order to derive a sharp and meaningful bound.
A measure of how sharp the reduction error is bounded is the \emph{effectivity} of the error bound.
The hierarchical and ALP error bounds differ from the standard error bound in that the size of the additional reduced model can be used to steer the effectivity of the error bound by increasing the computational complexity of the additional reduced model used to derive the respective error bound.

This publication shows how these error bounds can be beneficially used for high-dimensional pH systems. Our three main contributions are:
\begin{enumerate}
  \item We adapt and apply the existing error bounds to pH systems by exploiting certain properties of the pH system matrices, thereby obtaining a computable and rigorous bound,
  \item we prove that the hierarchical error bound and the ALP error bound are equivalent for linear problems with a certain choice of reduced basis and assumption on the initial conditions,
  \item we numerically compare the error bounds for a challenging three-dimensional pH model of a classical guitar with fluid-structure interaction~\cite{RettbergEtAl22}.
\end{enumerate}

The paper is organized as follows:
In \cref{sec:mor}, we briefly introduce the essentials of MOR of pH systems to fix the notation.
Subsequently, we discuss error bounds of the reduction error in MOR of pH systems in \cref{sec:error_estimation}.
Numerical experiments compare the error bounds for a challenging three-dimensional pH model of a classical guitar with approximately 5000 structural and 6300 fluid degrees of freedom in \cref{sec:numerics}. Finally, we conclude the paper in \cref{sec:conclusion_outlook}.

%
%
\section{Model order reduction of port-Hamiltonian systems}
\label{sec:mor}

A linear time-invariant (LTI) pH system in descriptor formulation~\cite{RettbergEtAl22} is considered
\begin{equation}
  \begin{aligned}
  \bE \dot{\tilde{\statex}}(t) & = ( \bJ - \bD) \bQ \tilde{\statex}(t) + \bB \bu(t), & \quad \tilde{\statex}(t_0) = \tilde{\statex}_0, \\
  \tilde{\by}(t) & = \bB\tran \bQ\tilde{\statex}(t) \label{eq:descriptor_portHamiltonian} 
  \end{aligned}
\end{equation}
with the energy-related matrices $\bE,\bQ\in\bbR^{N\times N}$ that satisfy the symmetry condition $\bE\tran\bQ = \bQ\tran\bE$ and the pH descriptor state $\tilde{\statex} \colon [t_0,T]\to \bbR^{N}$ for an initial time $t_0$ and end time $T$. If $\bE$ is non-singular, the system can be reformulated via the coordinate transformation $\statex = \bE\tilde{\statex}$ to a standard pH system
\begin{equation}
  \begin{aligned}
    \dot{\statex}(t) & = ( \bJ - \bD) \bH \statex(t) + \bB \bu(t), & \quad \statex(t_0) = \bE^{-1}\tilde{\statex}_0 = \statex_0,  \\
    \by(t) & = \bB\tran \bH\statex(t)    \label{eq:portHamiltonian}
  \end{aligned}
\end{equation}
where $\bH = \bQ\bE^{-1}$ defines the energy matrix with $0 \prec \bH = \bH\tran \in \bbR^{N \times N}$, which holds if the matrices $\bE$ and $\bQ$ commute. The Hamiltonian $\cH(\statex) := \frac{1}{2}\statex\tran\bH\statex$ specifies an energy function of the system. The matrices $\bJ = -\bJ\tran \in \bbR^{N \times N}$, $0 \preceq \bD = \bD\tran \in \bbR^{N \times N}$ and $\bB \in \bbR^{N \times m}$ describe the energy routing, dissipation, and port matrix, respectively. Furthermore, the system consists of the pH state $\statex \colon [t_0,T]\to \bbR^{N}$, the input $\bu \colon [t_0,T]\to \bbR^{m}$ and the initial state $\statex_0\in \bbR^{N}$. Additionally, we equip the space $\bbR^{N}$ with the energy inner product $\langle \cdot,\cdot\rangle_{\bH}$ and its induced energy norm $\hnorm{\cdot}$, and the space $\bbR^{N\times N}$ with the corresponding induced energy operator norm, which will also be denoted as $\hnorm{\cdot}$ given by
\begin{equation}
   \begin{aligned}
    \hnorm{\statex}^2 & := \langle \statex , \statex \rangle_{\bH} := \statex\tran \bH \statex  = \norm{ \bH^{1/2}\statex}_2 \\
    \hnorm{\bA} & := \norm{\bH^{1/2} \bA \bH^{-1/2}}_2 = \lambda_{\max}\left( \bH^{1/2}\bA \bH^{-1/2}\right), \label{eq:energy_inner_product}
   \end{aligned}  
\end{equation}
where $\bH^{1/2}$ denotes a positive definite square root of $\bH$. 

The unique solution of the first order initial value problem (IVP)~\eqref{eq:portHamiltonian} is given in closed form by
\begin{align}
    \bx(t) = \exp((\bJ-\bD)\bH (t-t_0))\bx_0 + \int\limits_{t_0}^t \exp( (\bJ-\bD)\bH(t-s))\bB \bu(s) \,\mathrm{d}s \label{eq:uniqueSolution_firstOrderSystem}.
\end{align}
Port-Hamiltonian systems implicitly exhibit many useful properties, some of which are briefly described. A pH system is a generalization of a classical Hamiltonian system where the conservation of energy is replaced by the dissipation inequality
\begin{equation}  
  \cH(\statex(t_1))-\cH(\statex(t_0))\leq\int_{t_0}^{t_1}\by(t)\tran\bu(t)\,dt\quad\text{with}\quad t_1>t_0. \label{eq:dissipationInequality}
\end{equation}
Together with the reasonable assumption that the Hamiltonian $\cH(\statex)$ is strictly positive, i.e.\ $\cH(\statex)>0$, it follows that the system is both passive and stable~\cite{SchaftJeltsema14}. Furthermore, pH systems are perfectly suitable for network-based modeling due to their modular composition. They incorporate a Dirac structure that describes a built-in power continuity. The Dirac structure guarantees that coupling two or more pH systems results again in a pH system~\cite{SchaftJeltsema14}. This is very useful when multiple subsystems are connected and holds even for systems with multiple physical domains. Many further research results exist that exploit the pH structure from different fields, e.g.\ control theory, error analysis, and optimization~\cite{RashadEtAl20}. For these reasons, it is advisable to use MOR that preserves the pH structure throughout the reduction process resulting in a pH system of much smaller dimension~\cite{Liljegren-Sailer20}. 

High-dimensional pH IVPs~\eqref{eq:portHamiltonian} often arise from a spatial semi-discretization of partial differential equations (PDE), e.g.\ with finite element methods (FEM). Those high-fidelity or full-order models (FOM) are usually computationally demanding and hence, unsuitable for e.g.\ multi-query simulations for optimization, real-time requirements of control tasks, or even too large to be computed on usual computers due to memory restrictions. For these reasons, a modern model with different hierarchical levels also includes a reduced model that avoids the abovementioned problems. One popular way of reducing the FOM is the reduction via projection, where the solution $\statex(t)$ is approximated in a subspace $\cV$ of dimension $n\ll N$ which is described by a basis matrix $\basisprimal\in\bbR^{N\times n}$ with $\colspan(\basisprimal)=\cV$. This leads to the approximation
\begin{align*}
 \statex \approx\approxstate := \basisprimal \redstate \in \bbR^N
\end{align*}
with the approximated solution $\approxstate\in\bbR^N$ and the reduced state $\redstate\in\bbR^n$. 

The structure of the pH system can be preserved through a specific Petrov-Galerkin projection as it has been investigated in~\cite{RettbergEtAl22,WolfEtAl10}. In the current work, we use the reduced pH system
\begin{equation}
  \begin{aligned}
    \basisprimal\tran\bH\basisprimal \Dredstate(t)  &= \basisprimal\tran\bH( \bJ - \bD) \bH\basisprimal \redstate(t) + \basisprimal\tran\bH\bB \bu(t) \in\bbR^n, \\ \quad \redstate(t_0) &= \basisprimal\tran\bH\statex_0 \label{eq: Reduced Port-Hamiltonian}
   \end{aligned}
\end{equation}
as it is obtained by left-multiplying the approximated system with $\basisprimal\tran\bH$ denoted as \textit{pH-preserving} reduction in~\cite{RettbergEtAl22}. The reduced matrices $\bE_r = \basisprimal\tran\bH\basisprimal$, $\bQ_r = \bI_n$, $\bJ_r = \basisprimal\tran\bH\bJ\bH\basisprimal$, $\bD_r = \basisprimal\tran\bH\bD\bH\basisprimal$ still satisfy the properties $\bJ_r = -\bJ_r\tran$, $0\prec\bD_r =\bD_r\tran$ and $\bE_r\tran\bQ_r = \bQ_r\tran\bE_r$.

%
%
\section{Error estimation of port-Hamiltonian systems}
\label{sec:error_estimation}
To the extent that not all of the dynamics take place in the low-dimensional subspace, a deviation of the reduced order model (ROM) compared to the FOM in the form of the error
\begin{align}
  \error(t) := \statex(t) - \approxstate(t)  \label{eq:error}
\end{align}
arises. The crucial task of model reduction is to find a basis $\basisprimal$ that keeps this reduction error as small as possible. However, this error is usually unknown, since the FOM state $\statex$ can not be calculated for reasons already mentioned, e.g.\ computational efficiency. Nevertheless, it is highly important to get information about the error to make statements about the quality of the simulation results of the reduced system or to use this information for adaptive basis generation schemes. For this reason, methods have been developed which provide a rigorous bound on errors based only on data in a reduced dimension. The first approaches considering a posteriori error estimation in Reduced Basis (RB) methods have been proposed for linear stationary systems~\cite{NguyenEtAl05}, which have then been extended to nonlinear~\cite{VeroyPatera05} and time-dependent problems~\cite{GreplPatera05,KnezevicEtAl11,GreplEtAl07}. This technique has been transferred to MOR of dynamical systems~\cite{HaasdonkOhlberger09} and was improved for mechanical systems~\cite{GrunertFehrHaasdonk20}. For the parametric wave equation, highly effective RB error bounds have been proposed~\cite{GlasEtAl20}. For the dissipative wave-equation we also refer to~\cite{StahlEtAl22,EggerEtAl18}. The idea of auxiliary systems for error assessment also has been applied in the iterated error system approach~\cite{AntoulasEtAl18,FengEtAl23}. In the following, we will first outline the procedure for calculating the standard error bound, which, however, usually yields results with high overestimations and is therefore only of limited use in terms of its informative value for the quality of the reduced dynamical system. For this reason, two improved methods, namely (a) a \textit{hierarchical error bound} and an error bound based on (b) an \textit{auxiliary linear problem (ALP)}, can reduce the overestimation of the error bounds. Both approaches rely on the calculation of a second approximation of (a) the dynamical system and (b) the error system. Hence, improving the error bound comes at the cost of additional computational time but is still less effort than calculating the FOM. The error bounds are adapted to the case of a pH system to exploit the pH structure and thereby further enhance the bounds.

\subsection{Standard error bound}
The error, i.e.\ the difference between full state and approximated state \eqref{eq:error}, fulfills the following IVP
\begin{align}
    \Derror(t) & = (\bJ - \bD)\bH \error(t) + \res(t)\in\bbR^N, & \quad \error(t_0) = \bm 0, \label{eq:error_system} 
\end{align}
where the initial error $\error(t_0) = \bm 0$ since we assume that $\statex_0 \in \colspan(\basisprimal)$. Generalizations exist, which allow more general $\statex_0$ or the choice of $\basisprimal$, which comes at the price of an additional term in the error bound~\cite{HaasdonkOhlberger11}. The residual of the primal system\footnote{The \textit{primal} system describes all terms that belong to the first approximation of the high-fidelity model while later the secondary approximations are introduced.}, i.e.\ the difference between the left-hand side (LHS) and right-hand side (RHS) of the approximated primal system, is given as the residual equation 
\begin{equation}
  \begin{aligned}
    \res(t) & = (\bJ-\bD)\bH\basisprimal\redstate(t) + \bB \bu - \basisprimal\Dredstate(t) \\ &= \projprim( \bJ - \bD)\bH \approxstate(t) + \projprim \bB \bu(t) \in\bbR^N, \label{eq:residual_equation_primal}
  \end{aligned}
\end{equation}
where the projection operator is defined as
\begin{align}
    \projprim & := \bI - \basisprimal \left(\basisprimal\tran\bH\basisprimal\right)^{-1}\basisprimal\tran\bH \in\bbR^{N\times N}. \label{eq:projection_primal}
\end{align}
Note, that $\projprim$ is the orthogonal projection onto the subspace $\cV^\perp$ with respect to the energy inner product~\eqref{eq:energy_inner_product}, i.e.\
\begin{align*}
  \left( \projprim \statex \right)\tran \bH\basisprimal = \bm 0 \in \bbR^{1\times n} \quad \forall\statex\in\bbR^{N}
\end{align*}
holds. The projection $\projprim$ is only well-defined if the matrix $\basisprimal\tran\bH\basisprimal\in\bbR^{n\times n}$ is invertible, which is always satisfied for a basis with $n<N$.

Analogously to \eqref{eq:uniqueSolution_firstOrderSystem}, the unique solution of \eqref{eq:error_system} is expressed by 
\begin{align}
   \error(t) = \int\limits_{t_0}^t \exp( (\bJ-\bD)\bH(t-s)) \res(s) \,\mathrm{d}s, \label{eq:unique_solution_error_system}
\end{align}
which gives the true error of the primal system. But since \eqref{eq:unique_solution_error_system} is in the high-dimensional space, the calculation of the matrix exponential is as computationally expensive as solving the FOM and hence, not feasible.

The error can be bounded by splitting the matrix-vector product in the form
\begin{align}
   \hnorm{\error(t)} \leq \max\limits_{s \in [t_0,T]} \left(\hnorm{\exp( (\bJ-\bD)\bH s)} \right) \int\limits_{t_0}^t  \hnorm{\res(s)} \,\mathrm{d}s. \label{eq:error_bound_split}
\end{align}
To circumvent the computationally demanding calculation of the norm of the matrix exponential, one can make use of the logarithmic norm~\cite{Soederlind06}
\begin{align*}
  \nu_{\ast}(\bA) = \lim_{h\to 0^+}\frac{\norm{\bI + h\bA}_{\ast}-1}{h},
\end{align*}
where $\bA$ is a square matrix, $\norm{\cdot}_{\ast}$ is an induced matrix norm, and $h\in\bbR_{>0}$. The norm of a matrix exponential can be bounded with
\begin{align}
  \norm{\exp(\bA s)}_{\ast} \leq \exp(\nu_{\ast}(\bA s)) \quad \forall s\geq 0. 
  \label{eq:logarithmic_norm_matrix_exponential}
\end{align}
In the specific case of the 2-norm, the logarithmic norm can be expressed as
\begin{align*}
  \nu_2(\bA) = \lambda_{\max}\left(\frac{\bA + \bA\tran}{2}\right),
\end{align*}
where $\lambda_{\max}$ describes the largest eigenvalue~\cite{DesoerVidyasagar09}. In the pH case, one needs to compute 
\begin{align*}
  \nu_{\bH}((\bJ - \bD)\bH s) = \nu_{2}(\bH^{1/2}(\bJ - \bD)\bH \bH^{-1/2}s).
\end{align*}
The pH structure is exploited by considering that $\bJ$ is skew-symmetric and therefore vanishes, leaving only the symmetric part 
\begin{align}
  \nu_{\bH}((\bJ - \bD)\bH s) = \lambda_{\max}(-\bH^{1/2}\bD\bH^{1/2}s) \leq 0, \label{eq:logarithmic_norm_portHamiltonian}
\end{align}
which can be bounded by zero due to $-\bD\preceq0$. Note that, if $-\bD\prec 0$ one could precalculate the largest eigenvalue to obtain an even better constant. Inserting \eqref{eq:logarithmic_norm_portHamiltonian} into \eqref{eq:logarithmic_norm_matrix_exponential} leads to 
\begin{align}
   \hnorm{\exp( (\bJ-\bD)\bH s)} \leq \exp(\nu_{\bH}((\bJ - \bD)\bH s))\leq 1. \label{eq:LogarithmicNorm} 
\end{align}

The standard error bound is therefore determined by the integral of the residual
\begin{align}
  \hnorm{\error(t)} \leq \boundstandard(t) :=  \int\limits_{t_0}^t  \hnorm{\res(s)} \,\mathrm{d}s. \label{eq:standard_error_bound}
\end{align}
Obviously, this error bound can only be monotonically increasing over time, even if the true error should occasionally decay. This makes the standard error bound typically pessimistic. 
\subsection{Auxiliary linear problem (ALP) error bound}
Since the standard bound often highly overestimates the error, there is a great desire for improvement. One approach to generate a posteriori error estimates of arbitrary good effectivity for general nonlinear, steady and unsteady problems, has been introduced as \textit{auxiliary linear problem (ALP)} based error bounds. The method is based on an approximation of the general nonlinear error system by a linearization and subsequent MOR~\cite{SchmidtEtAl20}. This reduction uses a second projection basis, the ALP basis $\basisalp\in\bbR^{N\times \sizealp}$ with $\sizealp\ll N$. In our present case of a linear FOM, the ALP system is directly the linear error system~\eqref{eq:error_system}, which is approximated in a second subspace $\colspan(\basisalp) = \cV_A$. The approximated error is described by
\begin{align*}
    \approxerror(t) = \basisalp \rederror(t) \approx \error(t) \in \bbR^N,
\end{align*}
with the reduced coordinates $\rederror(t)\in\bbR^{\sizealp}$. Note also that the validity criterion of~\cite{SchmidtEtAl20}, which is required in the nonlinear case, is always satisfied for a linear FOM, hence can be ignored in the following. In the linear case the error system~\eqref{eq:error_system} can be interpreted as a pH system with the error as state variable and the residual as the input. Hence, one can once more use structure-preserving MOR that leads to a reduced error system
\begin{align}
    \basisalp\tran\bH\basisalp \Drederror(t) & = \basisalp\tran \bH( \bJ - \bD) \bH\basisalp \rederror(t) + \basisalp\tran\bH\res(t) \in\bbR^{\sizealp}, & \quad \rederror(t_0) = \bm 0, \label{eq: ALP Port-Hamiltonian} 
\end{align}
which still is a pH system. The reduced initial error $\rederror(t_0)$ vanishes due to $\error(t_0)=\bm 0$. One can now define a second error $\erroraux(t) \in \bbR^N$ between the true error and the approximated error as
\begin{align}
    \erroraux(t) := \error(t) - \approxerror(t) \in\bbR^N. \label{eq:alp_general_error} 
\end{align}
Analogously to \eqref{eq:error_system}, the pH IVP for the second error reads
\begin{align}
    \Derroraux(t) & = (\bJ - \bD)\bH \erroraux(t) + \resalp(t) \in \bbR^N, & \quad \erroraux(t_0) = \bm 0, \label{eq:error_alp_portHamiltonian} 
\end{align}
where the second residual $\resalp$ is exclusively defined by values of the error approximation as
\begin{align*}
    \resalp(t) & = \projalp( \bJ - \bD)\bH \approxerror(t) + \projalp \res(t) \in\bbR^N, 
\end{align*}
and the projection operator $\projalp\in\bbR^{N\times N}$ is defined by
\begin{align*}
\projalp & = \bI - \basisalp \left(\basisalp\tran\bH\basisalp\right)^{-1}\basisalp\tran\bH \in\bbR^{N\times N}, 
\end{align*}
where $\basisalp\tran\bH\basisalp$ needs to be invertible which is satisfied if $\operatorname{colrank}(\basisalp)=\sizealp$.
The auxiliary error system \eqref{eq:error_alp_portHamiltonian} can be uniquely solved with
\begin{align}
\erroraux(t) = \int\limits_{t_0}^t \exp( (\bJ-\bD)\bH(t-s)) \resalp(s) \,\mathrm{d}s \label{eq:error_alp_unique_solution}
\end{align}
and hence the same bound as in \eqref{eq:error_bound_split} can be applied by making use of the logarithmic norm \eqref{eq:LogarithmicNorm}
   \begin{align*}
   \hnorm{\erroraux(t)} \leq \underbrace{\max\limits_{s \in [t_0,T]} \hnorm{\exp( (\bJ-\bD)\bH s)}}_{\leq 1 \, (\text{cf.}\, \eqref{eq:LogarithmicNorm})} \int\limits_{t_0}^t  \hnorm{\resalp(s)} \,\mathrm{d}s.
   \end{align*}
From \eqref{eq:alp_general_error} we obtain the relation
   \begin{align*}
   \error(t) = \approxerror(t) + \erroraux(t) \in\bbR^N.
   \end{align*}
Using the energy norm \eqref{eq:energy_inner_product} and the triangle inequality, this converts to
   \begin{align*}
   \hnorm{\error(t)} = \hnorm{\approxerror(t) + \erroraux(t)} \leq \hnorm{\approxerror(t)} + \hnorm{\erroraux(t)}.
   \end{align*}
Finally, we obtain a computable error bound for the primal error $\error(t)$ based on the auxiliary linear problem \eqref{eq:error_system}
  \begin{align}
    \hnorm{\error(t)} \leq \boundalp(t) := {\hnorm{\approxerror(t)}} + {\int\limits_{t_0}^t  \hnorm{\resalp(s)} \,\mathrm{d}s }. \label{eq:alp_bound}
  \end{align}
Provided that the second approximation $\erroraux$ is of sufficient quality, the calculation should result in a highly effective error bound~\cite{SchmidtEtAl20}.

\subsection{Hierarchical error bound}
Another approach to counteracting the high overestimation of the standard error bound, motivated by a suggestion in~\cite{HainEtAl19}, is again using a second approximation. Only this time, the original pH system \eqref{eq:portHamiltonian} is approximated rather than the error system. The system is approximated in a second subspace $\colspan(\basishier)$ with the hierarchical basis matrix $\basishier\in\bbR^{N \times \sizehier}$. The approximated solution is defined as
\begin{align*}
    \approxstateaux(t) = \basishier \redstateaux(t) \approx \statex(t) \in\bbR^N,
\end{align*}
where $\redstateaux(t)\in\bbR^{\sizehier}$ are the reduced coordinates. The second system needs to be of better accuracy than the primal system which is ensured by $\sizehier > n$ and just adding additional $\sizehier-n$ basis vectors to $\basisprimal$. Using the \textit{pH-preserving} projection yields the reduced hierarchical system
\begin{equation}
  \begin{aligned}
  \basishier\tran\bH\basishier \Dredstateaux(t)  &= \basishier\tran\bH( \bJ - \bD) \bH\basishier \redstateaux(t) + \basishier\tran\bH\bB \bu(t) \in\bbR^{\sizehier}, \\ \quad \redstateaux(t_0) &= \basishier\tran\bH\statex_0. \label{eq:reduced_hierarchical_system}
  \end{aligned}
\end{equation}
The idea is in some sense analogous to error estimation in time discretization schemes, e.g.\ the Runge-Kutta scheme of 4th order with variable step-size\footnote{in Matlab known as ode45}, where the error is estimated from a 5th order scheme of higher accuracy and the step-size is optimized with respect to this estimation.

The bound is then obtained by the error definition~\eqref{eq:error}, adding a zero,
\begin{align*}
  \error(t) = \statex(t) - \approxstate(t) = \statex(t) \underbrace{- \approxstateaux(t) + \approxstateaux(t)}_{=\,\bm 0} - \approxstate(t) \in \bbR^N,
\end{align*}
and utilizing the energy norm~\eqref{eq:energy_inner_product} and the triangle inequality
\begin{align*}
    \hnorm{\error(t)} = \hnorm{\statex(t) - \approxstate(t)} \leq \hnorm{\approxstateaux(t) - \approxstate(t)} + \hnorm{\statex(t) - \approxstateaux(t)}.
\end{align*}
The first term in the hierarchical error bound is the difference between the two approximated systems. Recognizing that the second summand can be bounded by the standard error bound~\eqref{eq:standard_error_bound} of the more accurate approximate $\approxstateaux(t)$, the hierarchical error bound can be written as
\begin{align}
  \hnorm{\error(t)} \leq \boundhier(t) := \hnorm{\approxstateaux(t) - \approxstate(t)} + {\int\limits_{t_0}^t  \hnorm{\reshier(s)} \,\mathrm{d}s }, \label{eq:hier_bound}
\end{align}
where the residual is calculated as 
\begin{align*}
  \reshier(t) & = \projhier( \bJ - \bD)\bH \approxstateaux(t) + \projhier \bB \bu(t) \in\bbR^N 
\end{align*}
and the projection operator can be obtained by inserting the basis $\basishier$ into \eqref{eq:projection_primal} which leads to
\begin{align}
  \projhier & := \bI - \basishier \left(\basishier\tran\bH\basishier\right)^{-1}\basishier\tran\bH \in\bbR^{N\times N} \label{eq:projection_hier}
\end{align}
with $\basishier\tran\bH\basishier$ being invertible which is satisfied if $\operatorname{colrank}(\basishier) = \sizehier$. Note that this error bound is analogous to the additive decomposition mentioned in~\cite{HainEtAl19} for general inf-sup stable parametric PDEs. In that paper also, another structure of a hierarchical error estimator is suggested, which is based on removing the residual summand and instead multiplying the fine-to-coarse ROM error by a small factor determined by a saturation constant. The resulting estimator, however, only is a rigorous bound under a so-called saturation assumption, which is hard to verify. This is why we focus on the above additive decomposition in \eqref{eq:hier_bound}, a rigorous error bound by construction.

\subsection{Relationship between hierarchical and ALP error bound}
The derivation of the equations in the previous subsections and the very similar structure of the equation components suggests that there is a relationship between the ALP and the hierarchical bound. The following proposition analyzes under which conditions the two bounds lead to the same result.

\begin{myprop}
  \label{prop:hier_equals_ALP}
  Consider a pH system~\eqref{eq:portHamiltonian} and let $\redstate\in\cC^1([t_0,T],\bbR^n)$ be the solution of the reduced pH system~\eqref{eq: Reduced Port-Hamiltonian} with an induced error $\error\in\cC^1([t_0,T],\bbR^N)$ given by~\eqref{eq:error}, $\rederror\in\cC^1([t_0,T],\bbR^{\sizealp})$ the solution of the reduced ALP problem~\eqref{eq: ALP Port-Hamiltonian} with a second error $\erroraux\in\cC^1([t_0,T],\bbR^N)$ given by~\eqref{eq:alp_general_error} which can be solved by~\eqref{eq:error_alp_portHamiltonian} and $\redstateaux\in\cC^1([t_0,T],\bbR^{\sizehier})$ the solution of the reduced hierarchical system~\eqref{eq:reduced_hierarchical_system}.

   If $\basisalp = \basishier = \begin{bmatrix}
    \basisprimal & \basisplus
  \end{bmatrix}$, where $\basisplus\in \bbR^{N\times\sizeplus}$ with $\sizeplus = \sizealp - n = \sizehier-n$ and furthermore, for the initial conditions $\rederror(0) = \redstateaux(0) - \begin{bmatrix}
    \redstate(0) \\ \bm 0_{\sizeplus\times 1}
  \end{bmatrix}$ is satisfied, then it holds that
  \begin{align}
    \boundalp(t) = \boundhier(t) \in\bbR_{\geq0},\quad \forall t\in [t_0,T],
  \end{align}
  with the ALP bound $\boundalp(t)$ from~\eqref{eq:alp_bound} and the hierarchical bound $\boundhier(t)$ from~\eqref{eq:hier_bound}.
\end{myprop}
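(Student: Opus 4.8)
The plan is to reduce the entire statement to one identity in the full state space,
\[
  \approxstateaux(t) \;=\; \approxstate(t) + \approxerror(t) \qquad \forall\, t\in[t_0,T],
\]
because once this holds the first summands of the two bounds agree trivially, $\hnorm{\approxerror}=\hnorm{\approxstateaux-\approxstate}$, and the residual integrands will be shown to coincide as well. Two structural facts are recorded at the outset. First, since $\basisalp=\basishier$, the two projectors built from this basis are identical, $\projalp=\projhier$. Second, because $\colspan(\basisprimal)\subseteq\colspan(\basishier)$, the $\bH$-orthogonal projector $\projhier$ annihilates the primal basis, $\projhier\basisprimal=\bm{0}$, which in turn yields the nesting relation $\projhier\projprim=\projhier$.

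The core of the argument is to prove the displayed identity in reduced coordinates. Writing $\basisprimal = \basisalp\begin{bmatrix}\bI_n\\\bm{0}\end{bmatrix}$ gives $\approxstate=\basisalp\begin{bmatrix}\redstate\\\bm{0}\end{bmatrix}$, so by full column rank of $\basisalp$ the identity is equivalent to $\redstateaux(t) = \begin{bmatrix}\redstate(t)\\\bm{0}\end{bmatrix} + \rederror(t)$ for all $t$. I would establish this by a uniqueness argument: both sides solve the same reduced hierarchical IVP~\eqref{eq:reduced_hierarchical_system}, which is well posed because $\basisalp\tran\bH\basisalp$ is invertible. The initial conditions match exactly by the assumed relation on $\rederror(t_0)$. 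For the dynamics I would substitute $\bm{z}:=\begin{bmatrix}\redstate\\\bm{0}\end{bmatrix}+\rederror$ into~\eqref{eq:reduced_hierarchical_system}, replace $\basisalp\tran\bH\basisalp\Drederror$ using the ALP equation~\eqref{eq: ALP Port-Hamiltonian}, and expand $\basisalp\tran\bH\res$ through the residual in the form $\res=(\bJ-\bD)\bH\approxstate+\bB\bu-\basisprimal\Dredstate$.

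The main obstacle I anticipate is precisely this dynamics check, whose success hinges on two block identities: $\basisalp\tran\bH(\bJ-\bD)\bH\approxstate = \big(\basisalp\tran\bH(\bJ-\bD)\bH\basisalp\big)\begin{bmatrix}\redstate\\\bm{0}\end{bmatrix}$ and $\basisalp\tran\bH\basisprimal\Dredstate = \big(\basisalp\tran\bH\basisalp\big)\begin{bmatrix}\Dredstate\\\bm{0}\end{bmatrix}$, the latter following from $\basisalp\tran\bH\basisprimal = \big(\basisalp\tran\bH\basisalp\big)\begin{bmatrix}\bI_n\\\bm{0}\end{bmatrix}$. Together these identities make $\basisalp\tran\bH\res$ reproduce exactly the extra contributions generated by the $\begin{bmatrix}\redstate\\\bm{0}\end{bmatrix}$ block of $\bm{z}$ — namely its time-derivative, stiffness and input terms — so that $\bm{z}$ satisfies the hierarchical ODE; uniqueness of the linear IVP then gives $\redstateaux=\bm{z}$ and hence the full-space identity.

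With the identity secured, the remaining step is to show $\resalp=\reshier$. Starting from $\resalp=\projalp(\bJ-\bD)\bH\approxerror+\projalp\res$, I would use $\projalp=\projhier$ together with $\projhier\projprim=\projhier$ to collapse $\projhier\res=\projhier(\bJ-\bD)\bH\approxstate+\projhier\bB\bu$; adding the $\approxerror$ term and invoking the identity $\approxstate+\approxerror=\approxstateaux$ gives $\projhier(\bJ-\bD)\bH\approxstateaux+\projhier\bB\bu=\reshier$. Thus both the norm terms and the residual integrands of~\eqref{eq:alp_bound} and~\eqref{eq:hier_bound} agree pointwise, and therefore $\boundalp(t)=\boundhier(t)$ for all $t\in[t_0,T]$.
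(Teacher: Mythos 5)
Your proposal is correct, and its first half coincides with the paper's argument: both reduce the claim to the two summand identities $\approxerror=\approxstateaux-\approxstate$ and $\resalp=\reshier$, and both establish the state identity by checking that a suitable combination of reduced trajectories solves a reduced linear IVP with matching initial data and invoking uniqueness (you verify $\begin{bmatrix}\redstate\tran & \bm 0\end{bmatrix}\tran+\rederror$ in the hierarchical system~\eqref{eq:reduced_hierarchical_system}, the paper verifies $\redstateaux-\begin{bmatrix}\redstate\tran & \bm 0\end{bmatrix}\tran$ in the ALP system~\eqref{eq: ALP Port-Hamiltonian}; these are the same computation read in opposite directions, and your block identities $\basisalp\tran\bH\basisprimal=\basisalp\tran\bH\basisalp\begin{bmatrix}\bI_n\\ \bm 0\end{bmatrix}$ do check out). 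Where you genuinely diverge is the residual identity: the paper introduces the full-space hierarchical error $\errorauxhier=\statex-\basishier\redstateaux$, shows $\erroraux=\errorauxhier$ by subtracting error decompositions, and then reads off $\resalp=\reshier$ by comparing the two full-space error IVPs~\eqref{eq:error_alp_portHamiltonian} and~\eqref{eq:error_hier_system}. You instead argue purely algebraically via $\projalp=\projhier$ and the nesting relation $\projhier\projprim=\projhier$ (valid since $\projhier\basisprimal=\bm 0$ because $\colspan(\basisprimal)\subseteq\colspan(\basishier)$), which collapses $\projhier\res$ and, combined with the already-proved state identity, gives $\resalp=\projhier(\bJ-\bD)\bH\approxstateaux+\projhier\bB\bu=\reshier$ directly. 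Your route is slightly more economical — it needs no auxiliary error quantity or appeal to differentiating and comparing IVPs — while the paper's route makes the structural parallelism between the two second-level error systems explicit; both are rigorous and yield the same conclusion.
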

\begin{proof}
  We want to show that under the assumptions in \cref{prop:hier_equals_ALP}
  \begin{align*}
    \boundalp(t) &= \boundhier(t) \quad \forall t\in[t_0,T] 
  \end{align*}
  which is equivalent to
  \begin{align*}
    {\hnorm{\approxerror(t)}} + {\int\limits_{t_0}^t  \hnorm{\resalp(s)} \,\mathrm{d}s }  &= \hnorm{\approxstateaux(t) - \approxstate(t)} + {\int\limits_{t_0}^t  \hnorm{\reshier(s)} \,\mathrm{d}s }
  \end{align*}
  which is satisfied if we show that the respective summands $\approxerror(t) = \approxstateaux(t) - \approxstate(t)$ and $\resalp(t) = \reshier(t)$ are equal. First, we want to show that $\approxerror(t) = \approxstateaux(t) - \approxstate(t)$. 

  Left multiplying~\eqref{eq:residual_equation_primal} with $\basisalp\tran\bH$ yields 
  \begin{align}
    \basisalp\tran\bH\res(t) & = \basisalp\tran\bH(\bJ-\bD)\bH\basisprimal\redstate(t) + \basisalp\tran\bH\bB \bu(t) - \basisalp\tran\bH\basisprimal\Dredstate(t) \label{eq:primal_residual_alp_projected}
  \end{align}
  and from \eqref{eq:reduced_hierarchical_system}, it follows that
  \begin{align}
    \basishier\tran\bH\bB \bu(t) =  \basishier\tran\bH\basishier \Dredstateaux(t) - \basishier\tran\bH( \bJ - \bD) \bH\basishier \redstateaux(t). \label{eq:alp_projected_input}
  \end{align}
  Using $\basishier = \basisalp$ we can insert \eqref{eq:alp_projected_input} into \eqref{eq:primal_residual_alp_projected} and obtain
  \begin{equation}
    \begin{aligned}
    \basisalp\tran\bH\res(t)  = \, &\basisalp\tran\bH(\bJ-\bD)\bH\basisprimal\redstate(t) - \basisalp\tran\bH\basisprimal\Dredstate(t) + \basisalp\tran\bH\basisalp \Dredstateaux(t) \\ &- \basisalp\tran\bH( \bJ - \bD) \bH\basisalp \redstateaux(t). \label{eq:alp_res_hier_diff_mixed_form}
    \end{aligned}
  \end{equation}
  Furthermore, the approximated state can be reformulated as 
  \begin{align*}
    \basisprimal\redstate(t) = \basisprimal\redstate(t) + \basisplus \bm 0_{\sizeplus\times 1} = \basisalp\begin{bmatrix}
      \redstate(t) \\ \bm 0_{\sizeplus \times 1}
    \end{bmatrix},
  \end{align*} 
  where $\bm 0_{\sizeplus \times 1}$ is a zero column vector of size $\sizeplus$ and therefore, \eqref{eq:alp_res_hier_diff_mixed_form} can be rewritten as
  \begin{equation*}
    \begin{aligned}
    \basisalp\tran\bH\basisalp \left(\Dredstateaux(t) - \begin{bmatrix}
      \Dredstate(t) \\ \bm 0_{\sizeplus\times 1}
    \end{bmatrix}\right) = &\basisalp\tran\bH(\bJ - \bD)\bH\basisalp \left(\redstateaux(t) - \begin{bmatrix}
      \redstate(t) \\ \bm 0_{\sizeplus\times 1}
    \end{bmatrix}\right) \\ &+ \basisalp\tran \bH\res(t).
  \end{aligned}
\end{equation*}
  Due to the assumption that
  \begin{align*}
    \rederror(0) = \redstateaux(0) - \begin{bmatrix}
      \redstate(0) \\ \bm 0_{\sizeplus\times 1}
    \end{bmatrix},
  \end{align*}
  $\left(\redstateaux(t) - \begin{bmatrix}
    \redstate(t) \\ \bm 0_{\sizeplus\times 1}
  \end{bmatrix}\right)$ solves the IVP~\eqref{eq: ALP Port-Hamiltonian} and it holds that for all $t$
  \begin{align*}
    \rederror(t) = \redstateaux(t) - \begin{bmatrix}
      \redstate(t) \\ \bm 0_{\sizeplus\times 1}
    \end{bmatrix}.
  \end{align*}
  Hence, it follows
  \begin{align}
    \basisalp\rederror(t) &= \basisalp \redstateaux(t) - \basisalp \begin{bmatrix}
      \redstate(t) \\ \bm 0_{\sizeplus\times 1}
    \end{bmatrix} = \basishier\redstateaux(t) - \begin{bmatrix}
      \basisprimal & \basisplus
    \end{bmatrix} \begin{bmatrix}
      \redstate(t) \\ \bm 0_{\sizeplus\times 1}
    \end{bmatrix} \nonumber \\ &= \basishier\redstateaux(t) - \basisprimal\redstate(t) \label{eq:approxalp_equal_approxhier_with_basis}
  \end{align}
  that can be equivalently written as
  \begin{align}
    \approxerror(t) = \approxstateaux(t) - \approxstate(t), \label{eq:approxalp_equal_approxhier}
  \end{align}
  which concludes the first part of the proof. It remains to show that $\resalp(t) = \reshier(t)$. From ~\eqref{eq:error} and \eqref{eq:alp_general_error} we know that
  \begin{align*}    
    \error(t) = \statex(t) - \basisprimal\redstate(t) \quad \text{and}\quad
    \erroraux(t) = \error(t) - \basisalp\rederror(t)
  \end{align*}
  and similarly, we can define a second error for the hierarchical system
  \begin{align*}
    \errorauxhier(t) = \statex(t) - \basishier\redstateaux(t)
  \end{align*}
  which can be equivalently solved with the IVP
  \begin{align}
    \Derrorauxhier(t) & = (\bJ - \bD)\bH \errorauxhier(t) + \reshier(t), & \quad \errorauxhier(t_0) = \bm 0. \label{eq:error_hier_system}
  \end{align}
  Subtracting the error expressions yields
  \begin{align}
    \error(t) - \errorauxhier(t) \overset{\eqref{eq:alp_general_error}}{=} \basisprimal\rederror(t) + \erroraux(t) - \errorauxhier(t) = \basishier\redstateaux(t) - \basisprimal\redstate(t). \label{eq:diff_error_errorhier}
  \end{align}
  If we subtract~\eqref{eq:approxalp_equal_approxhier_with_basis} from \eqref{eq:diff_error_errorhier}, it holds that
  \begin{align*}
    \erroraux(t) = \errorauxhier(t)
  \end{align*}
  and hence, also their time derivatives
  \begin{align*}
    \Derroraux(t) = \Derrorauxhier(t).
  \end{align*}
  Comparing~\eqref{eq:error_hier_system} and \eqref{eq:error_alp_portHamiltonian} shows that
  \begin{align*}
    \resalp(t) = \reshier(t)
  \end{align*}
  which concludes the proof.
\end{proof}

\begin{myrem}
  Note that in the derivation of \cref{prop:hier_equals_ALP} no specific properties of pH system have been used and hence, the results can also be used for more general linear dynamical system with non-structure-preserving MOR.
\end{myrem}

The additional condition on the initial value shall be further analyzed. Therefore, some reformulations are performed as
\begin{align*}
  \rederror(0) &= \redstateaux(0) - \begin{bmatrix}
    \redstate(0) \\ \bm 0_{\sizeplus\times 1}
  \end{bmatrix} \\
  \iff \basisalp\tran\bH(\statex_0-\basisprimal\basisprimal\tran\bH\statex_0) &= \basisalp\tran\bH\statex_0 - \begin{bmatrix}
   \basisprimal\tran\bH \redstate(0) \\ \bm 0_{\sizeplus\times 1}
  \end{bmatrix} \\
   \iff \basisalp\tran\bH\underbrace{\basisprimal\basisprimal\tran\bH\statex_0}_{=\basisalp\begin{bmatrix}
   \basisprimal\tran\bH \redstate(0) \\ \bm 0_{\sizeplus\times 1}
  \end{bmatrix}} &= \begin{bmatrix}
    \basisprimal\tran\bH \redstate(0) \\ \bm 0_{\sizeplus\times 1}
   \end{bmatrix} \\
\iff \left(\basisalp\tran\bH\basisalp-\bI \right) \begin{bmatrix}
  \basisprimal\tran\bH \redstate(0) \\ \bm 0_{\sizeplus\times 1}
 \end{bmatrix} &= \bm 0.
\end{align*}
From this, one can formulate three properties that ensure the initial value condition in each case:
\begin{enumerate}
  \item The secondary basis matrix, and secondary projection matrix are biorthogonal, which in our case is equivalent to the secondary basis being orthogonal with respect to the energy inner product, i.e.\ $\basisalp\tran\bH\basisalp=\bI$
  \item The initial condition is included in the primal basis $\statex_0\in\colspan(\basisprimal)$ which results in the same reduced vectors for the primal and hierarchical system $\redstateaux(0) = \begin{bmatrix}
    \redstate(0) \\ \bm 0_{\sizeplus\times 1}
  \end{bmatrix}$ and zero initial conditions of the reduced error $\rederror(0)=\bm 0$
  \item The system has zero initial conditions $\statex_0= \bm 0$ which is a special case of the second item since the zero vector is always included in the basis.
\end{enumerate}

%
%
\section{Results}
\label{sec:numerics}
We test the proposed improved error bounds for pH systems on a fluid-structure interaction model of a classical guitar, see \cref{fig:GuitarCutView} that is derived from the equations of linear elasticity for the guitar body and the wave equation for the enclosed air in the guitar body~\cite{RettbergEtAl22}. The system is of size $N=11248$ with $4908$ structural and $6340$ fluid degrees of freedom (DOFs). The pH matrices can be accessed at~\cite{RettbergEtAl23}. The top plate of the guitar is excited by a time-dependent force $u(t)=\hat{u}\sin(\omega t) $ at the location of the guitar bridge. The circular frequency $\omega = 2\pi f$ lies in the frequency range $f = [82,320]~\text{Hz}$ and the amplitude is $\hat{u}=1~\text{N}$. Throughout the paper, a pH system in the form of \eqref{eq:portHamiltonian} is used which is denoted as \textit{momentum formulation} in~\cite{RettbergEtAl22}.

\begin{figure}[htb]
	\centering
    \input{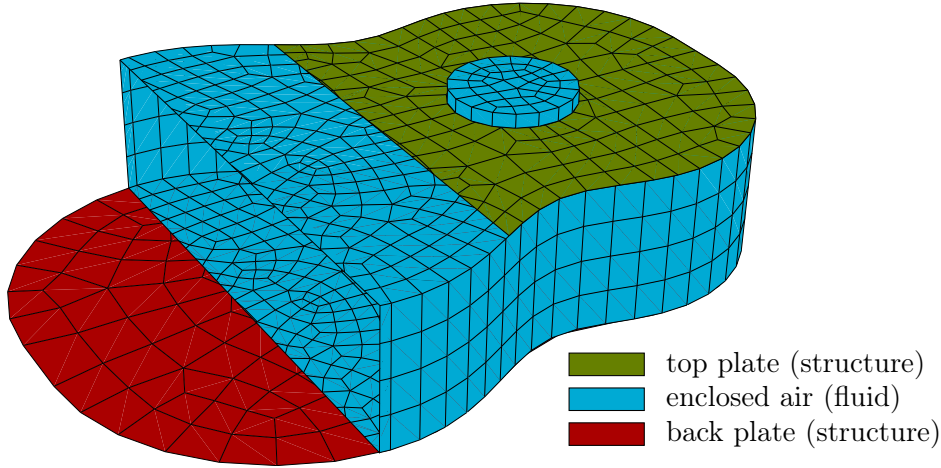}
	\caption{Sectional view of the FEM multi-physics model of a classical guitar~\cite{RettbergEtAl22}.}
	\label{fig:GuitarCutView}
\end{figure}

A reasonable quality indicator for assessing the quality of error estimators or error bounds is the effectivity which measures how close the error bound is to the true error. The effectivity is defined as 
\begin{align*}
  \effgen_k(t) := \frac{\Delta_k(t)}{\hnorm{\error(t)}},\quad k\in\{S,A,H\},
\end{align*}
where $k$ is a placeholder for the standard, ALP and hierarchical bound. Furthermore, we define the maximum (worst) effectivity as
\begin{align}
  \effgen_{\max,k} := \max\limits_{t \in [t_0,T]} \effgen_k(t),\quad k\in\{S,A,H\}. \label{eq:maximum_effectivities}
\end{align}
A rigorous error bound satisfies $\effgen_k\geq 1$ and the closer the bound is to one, the sharper the bound is. 

In the first experiment, the guitar is excited for a time interval $[t_0,T]=[0,0.01]~\text{s}$ which corresponds to approximately one period in the mentioned frequency interval. The data-based bases are generated from one trajectory for $f=100~\text{Hz}$ with $n_s = 1000$ time steps and the results are conducted for a test input of $f = 320~\text{Hz}$. For the basis generation, we use an energy-weighted Proper Orthogonal Decomposition (POD) using the full state snapshots $\bX_s:=(\statex_i)_{i=1}^{n_s}$, denoted as \textit{POD-State} in~\cite{RettbergEtAl22}. The primal basis size is set to $n=120$ while two secondary basis sizes $\sizealp=\sizehier\in\{200,400\}$ are investigated. The hierarchical basis $\basishier$ extends the primal basis $\basisprimal$ by additional POD-modes. The ALP basis $\basisalp$ is obtained from error snapshots $\bE_s:=(\error_i)_{i=1}^{n_s}$.

The results are illustrated in \cref{fig:SharpBound}, where the true error and the bounds are shown in the left axis while the effectivities are illustrated in right axis. Note that in order to better distinguish between the different bounds close to 1, the quantity $\effgen_k-1$ is shown. In general, one can observe that all bounds satisfy the rigorous bound property of $\effgen_k\geq 1$. The behavior of the standard bound discussed in the introduction is apparent, as it overestimates the true error by several orders of magnitude and is therefore not suitable for the typical uses of an error estimator. Additionally, the usual monotonicity property leads to a worsening of the effectivity over time. In contrast, both the hierarchical $\boundhier(t)$ and the ALP error bound $\boundalp(t)$ manage to improve the standard bound by several orders of magnitude, already for $n_{\text{A/H}}=200$. ALP and hierarchical bounds describe very similar characteristics for the same size of their bases. Differences can be seen in the effectivities for the case of $n_{\text{A/H}}=400$, where it seems advantageous to include information about the error dynamics into the secondary ALP basis. Even the monotonically increasing behavior can be overcome, and the bounds follow the true error very closely over the entire time interval.

\begin{figure}
  \centering
  \input{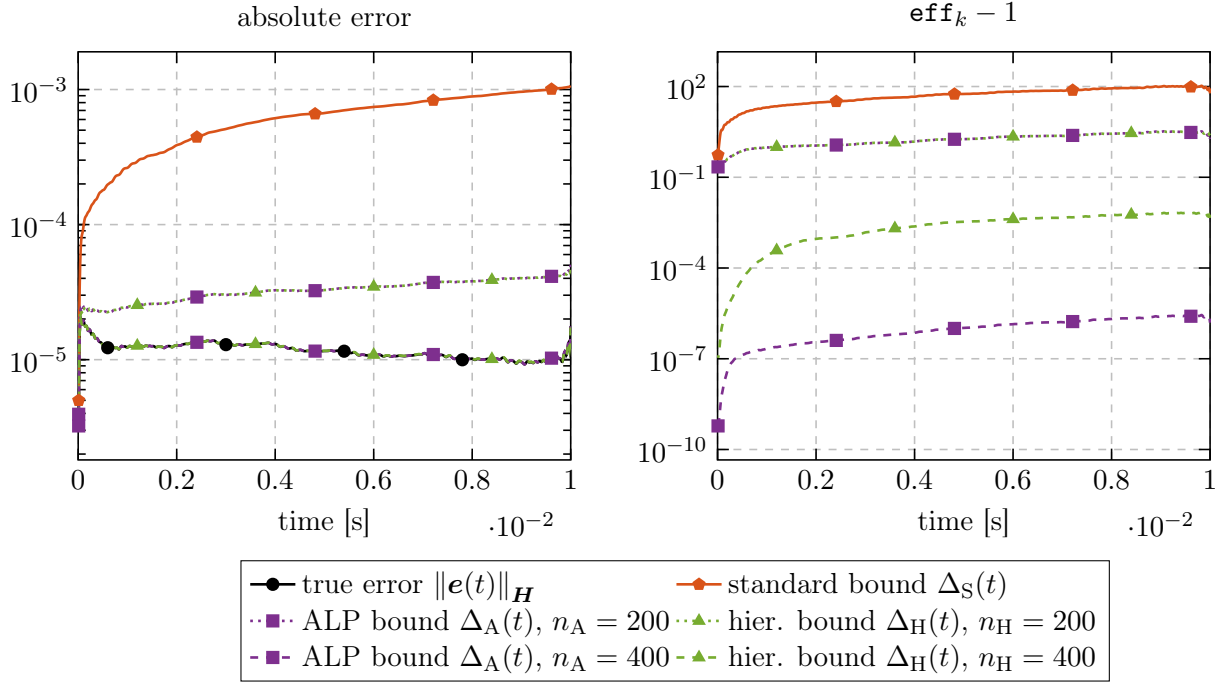}
  \caption{Error bounds and effectivities for a \textit{POD-State} basis generation with a primal basis of $n=120$, a hierarchical basis with additional POD-modes and an ALP basis from error snapshots for secondary basis sizes $n_k \in \{200,400\}$ for $k\in\{\text{A},\text{H}\}$ and a simulation time $T=0.01~\text{s}$.}
  \label{fig:SharpBound}
\end{figure}

In \cref{prop:hier_equals_ALP}, it was proven that the hierarchical and ALP error bound coincide if the conditions $\basisalp=\basishier=\begin{bmatrix}
  \basisprimal & \basisplus
\end{bmatrix}$ and $\rederror(0) = \redstateaux(0) - \begin{bmatrix}
  \redstate(0) \\ \bm 0_{\sizeplus\times 1}
\end{bmatrix}$ are satisfied. The second experiment is dedicated to show this result numerically. Therefore, three different basis generation techniques were used, namely the aforementioned \textit{POD-State}, and additionally \textit{C-SVD} and \textit{SVD-like}. The methods \textit{C-SVD}~\cite{PengMohseni16} and \textit{SVD-like}~\cite{PengMohseni16,BuchfinkEtAl19} are variants of the Proper Symplectic Decomposition (PSD) where the obtained bases satisfy symplectic properties. While \textit{C-SVD} builds upon an adapted complex snapshot matrix and leads to a symplectic, orthogonal basis~\cite{PengMohseni16}, the \textit{SVD-like} is constructed by computing a special decomposition leading to symplectic, non-orthogonal basis vectors~\cite{PengMohseni16,BuchfinkEtAl19}. Throughout all variants, we calculate a secondary basis $\basishier=\basisalp$ of size $\sizehier=\sizealp$ and take the first $n$ basis vectors for $\basisprimal$. Since zero initial conditions are used, the requirement on the initial values in \cref{prop:hier_equals_ALP} is always valid. In the experiments, different combinations of primal and secondary basis sizes were investigated. Furthermore, the simulation end time is increased to $T=0.1~\text{s}$, which implies a much more complex reproduction of the dynamics that now include several oscillations.

The results are illustrated in \cref{fig:EqualityHierALP} where the maximum effectivities~\eqref{eq:maximum_effectivities} of the ALP bound are displayed on the horizontal axis and the corresponding maximum effectivities for the hierarchical bound are on the vertical axis. In order to numerically verify \cref{prop:hier_equals_ALP}, the square marker with these coordinates needs to lie on the diagonal. All investigated combinations of basis generation techniques, primal and secondary basis sizes satisfy this property and hence, the theoretical results are validated numerically. Further information is provided in the figure, firstly stating that \textit{POD-State} and \textit{SVD-like} lead to very similar results, which is also in line with the results from~\cite{RettbergEtAl22} where these basis generation techniques showed the best reduction results and this also seems to be reflected for the effectivities of the error bounds. On the other hand, the effectivities are larger than from the first experiment, which is due to the more complex dynamics. \\

\begin{figure}[htb]
    \centering
    \input{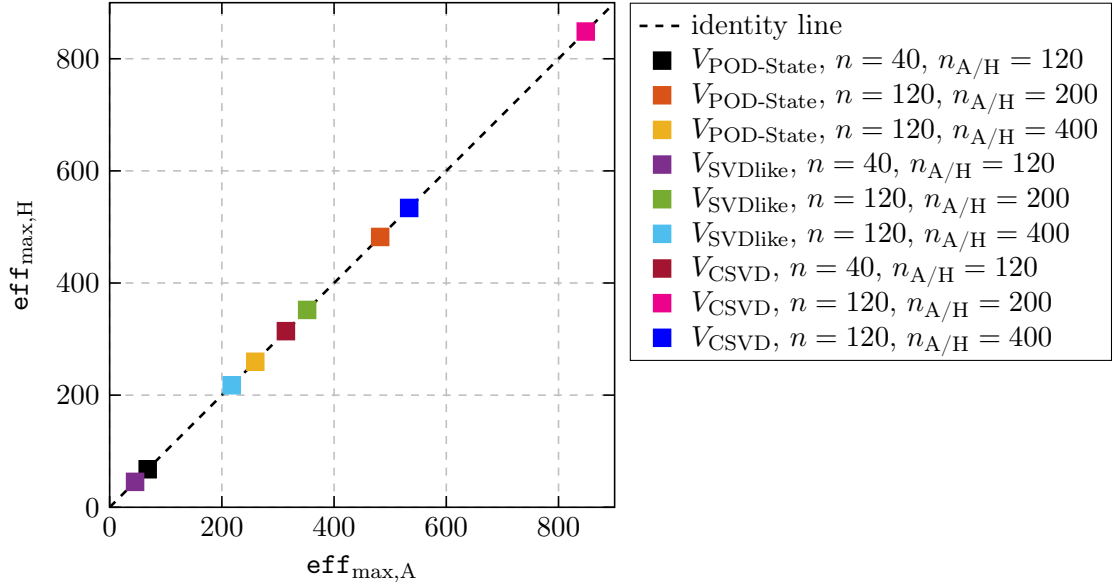}
    \caption{Comparison of hierarchical and ALP error bound with conditions from \cref{prop:hier_equals_ALP}.}
    \label{fig:EqualityHierALP}
  \end{figure}

  The third experiment analyzes the effects of more complex dynamics on the error bounds. For this, the bounds are subdivided into their components: The ALP bound consists of the approximated error $\hnorm{\approxerror}$ and the integral over the ALP residual ${\int\limits_{t_0}^t  \hnorm{\resalp(s)} \,\mathrm{d}s }$, while the hierarchical bound is composed of the difference of the approximated system states $ \hnorm{\approxstateaux(t) - \approxstate(t)} $ and the integral over the hierarchical residual ${\int\limits_{t_0}^t  \hnorm{\reshier(s)} \,\mathrm{d}s }$. In \cref{fig:BoundsSubparts}, the parameters of the first experiment are reused. Only that in the right axis of \cref{fig:BoundsSubparts}, the experiment is conducted for a longer simulation interval of $[t_0,T] = [0,0.1]~\text{s}$. During this increased time, multiple oscillation periods occur, acoustic pressure waves travel through the domain and are reflected at the boundaries, and energy is transferred between the structure and the fluid. It is not possible to fully reproduce these trajectories in a small number of basis vectors which is also reflected in an increased value of the true error compared to the experiment with the short simulation time. Even though the dynamics are hard to capture, the improved methods still outperform the standard bound by about one order of magnitude. 

  \begin{figure}[htb]
  	\centering
  	\input{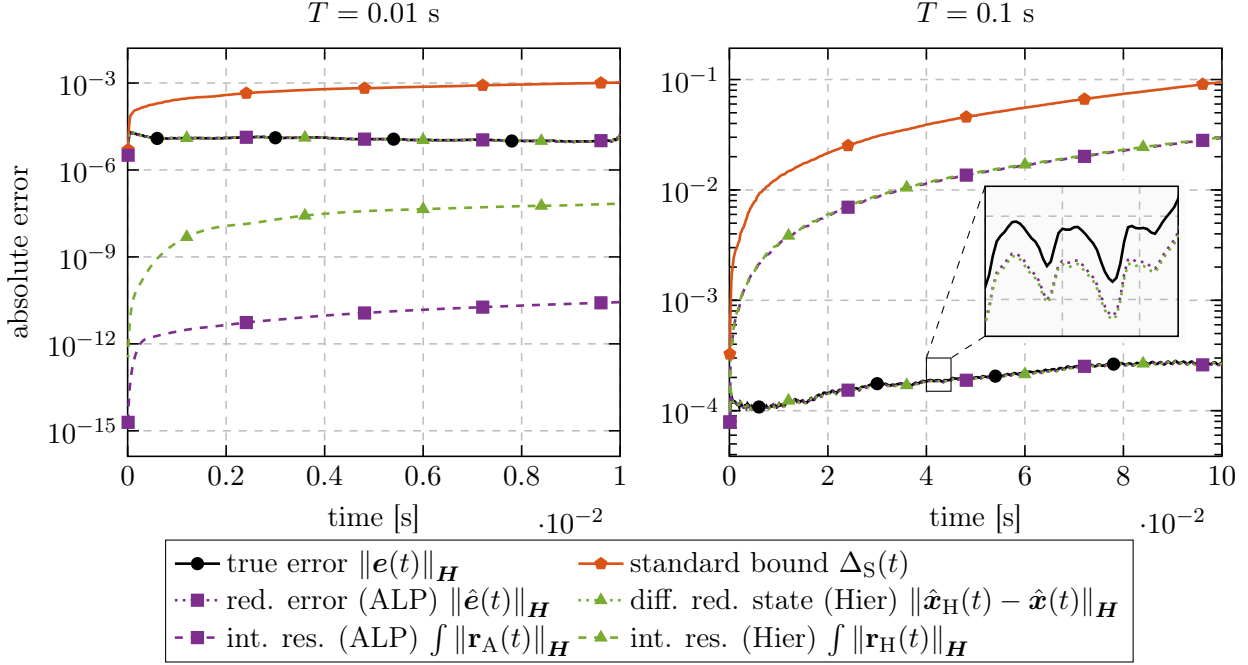}
  	\caption[]
  	{\small Error bounds for a \textit{POD-State} basis generation with a primal basis of $n=120$ and secondary basis size of $n_k = 400$ for $k\in\{\text{A},\text{H}\}$ with a hierarchical basis from additional POD-modes and an ALP basis from error snapshots. A short simulation time $T=0.01~\text{s}$ (left) and a long simulation time $T=0.1~\text{s}$ (right) are shown.} 
  	\label{fig:BoundsSubparts}
  \end{figure}

Looking at the subparts, we find that the residual terms of both the ALP and the hierarchical error lead to larger overestimations of the true error. While the residuals for the short simulation time are negligible because the dynamics can be reproduced very well, the residuals for the longer simulation time enlarge. However, the additional terms, namely the approximated error and the difference of the reduced states, shown in the zoomed-in axis, are very close to the true error. It should be noted that these values are not rigorous bounds but simply error estimators, which can also lead to $\effgen_k<1$, cf.~\cref{fig:BoundsSubparts}. Both, the first summand of the hierarchical and the ALP bounds, would be very suitable as error indicators. Using the approximated error as an error indicator in combination with a greedy procedure to select data samples for constructing the reduced order model (ROM) has also been proposed in~\cite{ChellappaEtAl21}.

\section{Conclusion and outlook}
\label{sec:conclusion_outlook}
In this work, we have adapted and applied existing error bounds, namely the standard error bound, the auxiliary linear problem error bound, and a hierarchical error bound, to pH systems. We thereby exploited the pH system matrix properties to circumvent the computationally demanding calculation of the matrix exponential. Theoretically and numerically, we have proven that in the linear case, the improved error bounds, i.e.~ALP and hierarchical error bounds, are equivalent under a specific choice of basis and initial conditions. Various numerical experiments have been performed for a three-dimensional model of a guitar with fluid-structure interaction. The results have shown effectivities close to one for short simulation times and, compared to the standard bound, improved error bounds for long simulation times that incorporate more complex dynamics. Finally, it was discussed how subcomponents of the improved error bounds could be used as error estimators. Future emphasis will focus on applying error bounds for the ROM construction via greedy procedures, similar to what is done in~\cite{BuchfinkEtAl20} as a PSD-greedy procedure and in~\cite{HainEtAl19} for the hierarchical error bound for inf-sup stable RB problems. Furthermore, attempts can be made to counteract the degradation of the reduction quality for longer simulation times by using time-partitioned bases. \\

%
%
\acknowledgements{
Supported by Deutsche Forschungsgemeinschaft (DFG, German Research Foundation) Project No. 314733389, and under Germany's Excellence Strategy -
EXC 2075 – 390740016. We acknowledge the support by the Stuttgart Center for Simulation Science (SimTech).}


\end{document}